\newtheorem{theorem}{Theorem}
\theoremstyle{plain}
\newtheorem{corollary}{Corollary}
\newtheorem{lemma}{Lemma}
\newtheorem{problem}{Problem}
\numberwithin{equation}{section}
\begin{document}
\title[A generalization of Pythagoras on a aurface]{A generalization of Pythagoras on a surface}
\author{Anastasios Zachos}

\address{Greek Ministry of Education, Athens, Greece}
 \email{azachos@gmail.com}
 \keywords{Pythagorean theorem, law of sines , law of cosines,  geodesic triangles, surface} \subjclass{51E10,
52A10, 52A41, 53C45, 53C22.}
\begin{abstract}
We analyze Toponogov's sine theorem for an infinitesimal geodesic triangle $\triangle ABC$ on a $C^{2}$ regular surface $M,$ which is given in his book \cite[Problem~3.7.2]{VToponogov:05} and we provide a generalization of the law of cosines for $\triangle ABC$ on $M.$ By replacing in the law of cosines $\angle B=\frac{\pi}{2}$ on $M$, we derive the generalized theorem of Pythagoras on a surface:

\[AC^2=AB^2+BC^2+f(\angle A, \frac{\pi}{2}, AB, BC) o(AC^{2})\]

or

\[AC^2=AB^2+BC^2+(\angle A+ \angle C-\frac{\pi}{2})^2.\]

where $f(\angle A, \angle B, AB, BC)$ is a rational function w.r. to $\cos\angle A,$ $\cos\angle B,$ $\sin\angle A,$ $\sin \angle B,$ $AB$ and $BC.$

\end{abstract}\maketitle

\section{Introduction}
The law of cosines introduced by Euclid in his Elements (Book~II, Proposition~12, 13 in \cite{EuclidHeath:56} ), without using the term cosine, for obtuse angled and acute angled triangles in the Euclidean plane $\mathbb{R}^{2}.$

In triangle $\triangle ABC$ if angle is obtuse or acute, then
\[AC^{2}=AB^2+BC^2-2AB BC \cos\angle B, \]
\[BC^{2}=AB^2+AC^2-2AB AC \cos\angle A, \]
\[AB^{2}=AC^2+BC^2-2AC BC \cos\angle C. \]

This is the law of cosines (Cosine Theorem) in $\triangle ABC.$
By setting, for instance $\angle B=\frac{\pi}{2},$ we derive the Pythagorean theorem

\[AC^{2}=AB^2+BC^2. \]

A generalization of the law of cosines and the law of sines for the two dimensional sphere $S^{2}$ and the hyperbolic plane $H^2$ is given by W. Thurston in his book \cite[Chapter~2.4]{WThurston:}, by using vector calculus. W.Thurston considered three unit vectors lying in two dimensional sphere $S^{2}$ or to a Lorenz Space $\mathbb{R}_{1}^{2}$  in $\mathbb{R}^{3}$ and defined the dual basis to these vectors and their dot products and applied an inversion w.r to a $3\times 3$ matrix. This vector process yields the spherical and hyperbolic law of cosines and sines:

Cosine law for a geodesic triangle (arcs of great circles on the sphere) $\triangle ABC$ on the unit sphere $S^{2}(1)$
\[\cos AC=\cos AB \cos BC+\sin AB \sin BC \cos\angle B,\]
\[\cos BC=\cos AB \cos AC+\sin AB \sin AC \cos\angle A,\]
\[\cos AB=\cos AC \cos BC+\sin AC \sin BC \cos\angle C,\]

Sine law for a geodesic triangle $\triangle ABC$ on the unit sphere $S^{2}(1)$

\[\frac{\sin AC}{\sin\angle B}=\frac{\sin BC}{\sin\angle A}=\frac{\sin AB}{\sin\angle C}\]

Cosine law for a geodesic triangle $\triangle ABC$ on the hyperbolic plane $H^{2}(1)$
\[\cosh AC=\cosh AB \cosh BC-\sinh AB \sinh BC \cos\angle B,\]
\[\cosh BC=\cosh AB \cosh AC-\sinh AB \sinh AC \cos\angle A,\]
\[\cosh AB=\cosh AC \cosh BC-\sinh AC \sinh BC \cos\angle C,\]

Sine law for a geodesic triangle $\triangle ABC$ on the hyperbolic plane $H^{2}(1)$

\[\frac{\sinh AC}{\sin\angle B}=\frac{\sinh BC}{\sin\angle A}=\frac{\sinh AB}{\sin\angle C}.\]

Berg and Nikolaev derived a unified cosine law for the $K$-plane (a sphere with constant Gaussian curvature $K>0$ $S^{2}_{K}$ and a hyperbolic plane with constant Gaussian curvature $-K<0$ $H^{2}_{K}$).

We denote by

\begin{displaymath}
\kappa = \left\{ \begin{array}{ll}
\sqrt{K} & \textrm{if $K>0$,}\\
i\sqrt{-K} & \textrm{if $K<0$.}\\
\end{array} \right.
\end{displaymath}

The unified cosine law for $\triangle ABC$ is given by:
\begin{equation} \label{eqvar1}
\cos(\kappa PQ)=\cos(\kappa PR)\cos(\kappa
RQ)+\sin(\kappa PR )\sin(\kappa RQ)\cos(\angle R),
\end{equation}

for $R\in \{A,B,C\}.$

By replacing $\angle R=\frac{\pi}{2}$ in (\ref{eqvar1}), we obtain the Pythagorean theorem on surfaces with constant
Gaussian curvature ($S^{2}_{K},$ $H^{2}_{K}$).
\[\cos(\kappa PQ)=\cos(\kappa PR)\cos(\kappa
RQ)\]

If the radius $R\to +\infty$ of the sphere $S_{K}^{2},$ then we derive the law of cosines in $\mathbb{R}^{2}.$

G. Darboux succeeded in deriving the law of cosines for an infinitesimal geodesic triangle $\triangle ABC$ on a smooth surface (\cite[Chapter~VII]{Cartan:},\cite[Livre~VI, Chapter~VIII]{Darboux:93}).
By introducing normal coordinates w.r. to a vertex, for instance at $B$ and by assuming that
$AB=AB_{0}$ and $BC=BC_{0},$ ($AB_{0}$, $BC_{0}$ are planar linear segments), the following generalization of
the law of cosines is given:

\[AC^{2}=AB^2+BC^2-2AB BC \cos\angle B -\frac{1}{3}Kh^2 AC^2\]

or

\[AC^{2}=AB^2+BC^2-2AB BC \cos\angle B -\frac{2}{3}K S AB BC \sin\angle B\]

or

\[AC^{2}=AB^2+BC^2-2AB BC \cos(\angle B -\frac{KS}{3}),\]

where

\[KS=\angle A+\angle B+\angle C-\pi,\]

where $h$ is the height of triangle $\triangle ABC$ from $B$
and $K$ is the Gaussian curvature of the space in the direction of the planar element of the triangle.

By replacing $\angle B=\frac{\pi}{2},$ we obtain:

\[AC^{2}=AB^2+BC^2-\frac{2}{3}AB BC (\angle A +\angle C-\frac{\pi}{2}).\]
If we consider as an infinitesimal number $o(AC)=(\angle A +\angle C-\frac{\pi}{2})<<1,$
then multiplied by $-\frac{2}{3}AB BC$ yields
\[AC^{2}=AB^2+BC^2+ (\angle A +\angle C-\frac{\pi}{2}).\]
This formula may be considered as a generalization of Pythagoras on a surface for infinitesimal right geodesic triangles in the sense of Darboux.
Thus, we consider the following problem.

\begin{problem}
Can we derive a generalization of Pythagoras for bigger infinitesimal geodesic triangles than the ones introduced by Darboux  on a $C^{2}$ regular surface $M$ having geodesics without self intersections?
\end{problem}

V. Toponogov introduced an important generalization for infinitesimal geodesic triangles on $M,$ which are bigger than the infinitesimal geodesic triangles in the sense of Darboux.
We obtain a positive answer w.r to Problem~1, by using Toponogov's sine theorem for infinitesimal geodesic triangles on $M$ and by generalizing the law of cosines for infinitesimal geodesic triangles on $M.$ As a special case, we derive the theorem of Pythagoras for right infinitesimal geodesic triangles on a $C^{2}$ regular surface $M.$

\section{Understanding Toponogov's law of sines on a surface}
We denote by $\triangle{ABC}$ an infinitesimal geodesic triangle on a regular surface of class $C^{2}$ in $\mathbb{R}^{3},$ by $AB, BC, AC$ the length of the infinitesimal geodesic arcs,
$AC=\delta$ and we set $\angle A\equiv \alpha,$ $\angle B\equiv \beta,$ and $\angle C\equiv \gamma.$

We continue by analyzing the proof given by V. Toponogov in \cite[Problem~3.7.2, Solution]{VToponogov:05}.

We note that an arc length parameterization $c(s)$ counting from the vertex $A$ to $C$ shall be used.
Let $\sigma (s)$ be a geodesic through $c(s),$ such that $\gamma\equiv \angle (\sigma(s), AC)$ and $B(s)=\sigma(s)\cap AB,$ $t(s)=AB(s),$ $l(s)=A(s)B(s)$ and $\beta(s)=\angle AB(s)C.$

By taking into account that $A(s)=c(s)$ and by applying the first variational formula of the length of geodesics. w.r to the arc length $s,$ yields:

\[\frac{dl}{ds}=\cos(\gamma)+\cos(\beta(s))\frac{dt}{ds}\]

We note that the physical parameter $s$ corresponds to the parametrization on $AC$
and not on $AB.$
Therefore, in the first variational formula of the length of geodesics
(\cite[Lemma 3.5.1]{VToponogov:05}), we need to set $\cos(\beta(s))\frac{dt}{ds}$, instead
of $\cos(\beta(s)).$
Then, he uses the following lemma:

\begin{lemma}\label{lem1}
Take on $M$ an infinitesimal geodesic triangle $\triangle ABC$ and assume that a region $D$ bounded by $\triangle ABC$ is homeomorphic to a disk. The following two formulas connects the angles of $\triangle ABC$ with the Gaussian curvature $K$ the region $D$ and the Landau symbol $o(AC):$
\begin{equation}\label{gb}
\angle A+ \angle B +\angle C-\pi=\int\int_{D}KdS,
\end{equation}

where

\begin{equation}\label{gb2}
\int\int_{D}KdS=o(s).
\end{equation}

\end{lemma}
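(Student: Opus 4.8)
The plan is to recognize the first identity~(\ref{gb}) as the local Gauss--Bonnet theorem specialized to a geodesic triangle, and the estimate~(\ref{gb2}) as an immediate consequence of the continuity of $K$ on a $C^{2}$ surface together with a crude upper bound on the area of $D$. For~(\ref{gb}), I would start from the Gauss--Bonnet formula for a simple region $R$ with piecewise regular boundary, $\iint_{R}K\,dS+\int_{\partial R}k_{g}\,ds+\sum_{i}\theta_{i}=2\pi\chi(R)$, where the $\theta_{i}$ are the exterior angles at the vertices. Applying it to $R=D$: by hypothesis $D$ is homeomorphic to a disk, so $\chi(D)=1$; the three sides of $\triangle ABC$ are geodesic arcs, so $k_{g}\equiv 0$ along $\partial D$ and the boundary integral drops out; and the exterior angles at $A$, $B$, $C$ are $\pi-\angle A$, $\pi-\angle B$, $\pi-\angle C$. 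Substituting gives $\iint_{D}K\,dS+3\pi-(\angle A+\angle B+\angle C)=2\pi$, which rearranges to~(\ref{gb}). It is worth recording that the $C^{2}$ regularity of $M$ is exactly what makes the second fundamental form continuous, hence $K=(LN-M^{2})/(EG-F^{2})$ a genuine continuous function, so that the integrand is well defined and the geodesic sides really have vanishing $k_{g}$.

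For~(\ref{gb2}) I would argue as follows. Since $K$ is continuous it is bounded, say $\abs{K}\le\Lambda$, on a fixed compact neighbourhood of the point to which the infinitesimal triangles shrink. It then remains to bound the area of $D$. Passing to geodesic normal coordinates centred at $A$ on a metric ball $B(A,\rho)$ whose radius $\rho>0$ does not depend on the size of the triangle --- such a $\rho$ exists because $M$ is $C^{2}$ regular and its geodesics have no self-intersections, so the exponential map is a diffeomorphism there and the chart metric is two-sidedly comparable to the Euclidean one --- for $\delta=AC$ small the whole of $D$ sits inside this chart; being enclosed by the curve $\triangle ABC$, which has diameter $O(\delta)$, its coordinate image lies in a Euclidean disc of radius $O(\delta)$, whence $\mathrm{Area}(D)=O(\delta^{2})$ for an infinitesimal triangle with all sides $O(\delta)$. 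Therefore $\bigl|\iint_{D}K\,dS\bigr|\le\Lambda\,\mathrm{Area}(D)=O(\delta^{2})=o(\delta)=o(s)$, which is~(\ref{gb2}).

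The exterior-angle bookkeeping in the first step is entirely routine, and so is the final chain of inequalities. The one point that genuinely uses the hypotheses on $M$ --- and which I expect to be the main obstacle --- is the \emph{uniform} area bound $\mathrm{Area}(D)=O(\delta^{2})$: one must produce a normal-coordinate chart, together with the accompanying two-sided metric comparison, on a ball whose radius stays bounded below as the triangle degenerates to a point, rather than on a neighbourhood that could shrink with the triangle. Once that is secured the remainder of the argument is elementary.
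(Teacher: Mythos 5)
Your argument is correct and follows exactly the route the paper intends: the paper itself offers no detailed proof, remarking only that the lemma ``is a special case of a classical theorem of Gauss--Bonnet for an infinitesimal region $D$,'' which is precisely your application of local Gauss--Bonnet with geodesic sides, $\chi(D)=1$, and exterior angles $\pi-\angle A$, $\pi-\angle B$, $\pi-\angle C$. Your second step, bounding $\abs{\iint_{D}K\,dS}\le\Lambda\,\mathrm{Area}(D)=O(\delta^{2})=o(s)$ via a normal-coordinate chart of uniform radius, simply supplies the quantitative detail the paper leaves implicit, so the proposal is a correct and more complete version of the same approach.
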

Lemma~1 is a special case of a classical theorem of Gauss-Bonnet for an infinitesimal region $D.$
 By applying Lemma~1 w.r. to $\triangle AB(s)C(s),$ we get:
\begin{equation}\label{eq1nimp}
\beta(s)= \pi-\alpha-\gamma+o(s)
\end{equation}
\[\cos(\beta(s))=\cos(\pi-\alpha-\gamma+o(s))\]
\[\cos(\beta(s))=-\cos(\alpha+\gamma) +o(s)
\sin(\alpha+\gamma)+o(o(s)).\] This is an expansion of Taylor
series with respect to $(\pi-\alpha-\gamma)$! and we get:
\[\cos(\beta(s))=-\cos(\alpha+\gamma) +o(s) \sin(\alpha+\gamma)+o(s)\]
\begin{equation}\label{eq2nimp}
\frac{dl}{ds}=\cos(\gamma)-\cos(\alpha+\gamma)\frac{dt}{ds}+o(s)\sin(\alpha+\gamma)\frac{dt}{ds}+o(s)\frac{dt}{ds}
\end{equation}

The function $t(s)$ is not supposed to be linear
function w.r. $s.$ Thus, $dt/ds$ does not equal to a
constant number, but it is continuous on the interval
$[0,\delta]$, therefore it is bounded on $[0,\delta]$ by a
constant number $C_{1,AB}$. Integrating both parts of the
inequality $(\int_{0}^{\delta}{dt/ds}ds)\le
C_{1,AB}\int_{0}^{\delta}{ds})$ and we get $AB\le C_{1,AB}AC$.

Therefore, by integrating (\ref{eq2nimp}) w.r. to $s$ from $0$ to
$\delta$, yields:
\[\l(\delta)=BC=\cos(\gamma)\delta-\cos(\alpha+\gamma)AB+(\sin(\alpha+\gamma)+1)o(\delta)\]
($C_{1,AB}$=$AB/AC$ after integrating with respect to s)

or

\begin{equation}\label{BC2}
BC+\cos(\alpha+\gamma)AB=\cos(\gamma)\delta+(\sin(\alpha+\gamma)+1)o(\delta)
\end{equation}

Similarly, we derive that:
\[AB=\cos(\alpha)\delta-\cos(\alpha+\gamma)BC+(\sin(\alpha+\gamma)+1)o(\delta)\]

or

\begin{equation}\label{AB2}
\cos(\alpha+\gamma)BC+AB=\cos(\alpha)\delta+(\sin(\alpha+\gamma)+1)o(\delta)
\end{equation}

The solution of the linear system of (\ref{BC2}), (\ref{AB2}) w.r. to $AB,$ $BC$
yields V. Toponogov's sine theorem for infinitesimal geodesic triangles on a surface:


\begin{theorem}[The Sine Theorem of Toponogov]\cite[Problem and Solution~3.7.2]{VToponogov:05}
\begin{equation}\label{eq1}
AB=\frac{\sin(\gamma)\delta}{\sin(\alpha+\gamma)}+\frac{o(\delta)(1+\sin(\alpha+\gamma))}{(1+\cos(\alpha+\gamma)}
\end{equation}
and
\begin{equation}\label{eq2}
BC=\frac{\sin(\alpha)\delta}{\sin(\alpha+\gamma)}+\frac{o(\delta)(1+\sin(\alpha+\gamma))}{1+\cos(\alpha+\gamma)}.
\end{equation}
\end{theorem}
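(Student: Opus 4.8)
The plan is to solve the $2\times 2$ linear system formed by equations~(\ref{BC2}) and~(\ref{AB2}) for the unknowns $AB$ and $BC$, and then simplify the resulting trigonometric coefficients. Write the system as
\begin{equation}\label{system}
\begin{cases}
\cos(\alpha+\gamma)\,AB + BC = \cos(\gamma)\,\delta + (1+\sin(\alpha+\gamma))\,o(\delta),\\
AB + \cos(\alpha+\gamma)\,BC = \cos(\alpha)\,\delta + (1+\sin(\alpha+\gamma))\,o(\delta).
\end{cases}
\end{equation}
The coefficient matrix is $\begin{pmatrix}\cos(\alpha+\gamma)&1\\1&\cos(\alpha+\gamma)\end{pmatrix}$, whose determinant is $\cos^2(\alpha+\gamma)-1=-\sin^2(\alpha+\gamma)$. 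First I would note that for an infinitesimal, non-degenerate geodesic triangle this determinant is nonzero (equivalently $\alpha+\gamma\not\equiv 0\pmod\pi$), so Cramer's rule applies.

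Next I would apply Cramer's rule. Solving for $AB$ gives
\[
AB=\frac{\cos(\alpha+\gamma)\bigl[\cos(\gamma)\delta+(1+\sin(\alpha+\gamma))o(\delta)\bigr]-\bigl[\cos(\alpha)\delta+(1+\sin(\alpha+\gamma))o(\delta)\bigr]}{-\sin^2(\alpha+\gamma)}.
\]
The $\delta$-coefficient in the numerator is $\cos(\alpha+\gamma)\cos(\gamma)-\cos(\alpha)$; using $\cos(\alpha)=\cos((\alpha+\gamma)-\gamma)=\cos(\alpha+\gamma)\cos(\gamma)+\sin(\alpha+\gamma)\sin(\gamma)$, this collapses to $-\sin(\alpha+\gamma)\sin(\gamma)$, so the $\delta$-term becomes $\dfrac{-\sin(\alpha+\gamma)\sin(\gamma)}{-\sin^2(\alpha+\gamma)}\delta=\dfrac{\sin(\gamma)}{\sin(\alpha+\gamma)}\delta$, which is the leading term of~(\ref{eq1}). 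The $o(\delta)$-coefficient in the numerator is $(\cos(\alpha+\gamma)-1)(1+\sin(\alpha+\gamma))$, and dividing by $-\sin^2(\alpha+\gamma)=(\cos(\alpha+\gamma)-1)(\cos(\alpha+\gamma)+1)$ cancels the factor $\cos(\alpha+\gamma)-1$, leaving $\dfrac{1+\sin(\alpha+\gamma)}{1+\cos(\alpha+\gamma)}\,o(\delta)$, the error term in~(\ref{eq1}). The computation for $BC$ is identical after interchanging the roles of $\alpha$ and $\gamma$ (note the system is symmetric under $\alpha\leftrightarrow\gamma$ together with $AB\leftrightarrow BC$), yielding~(\ref{eq2}).

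The only genuine subtlety — the main obstacle — is bookkeeping the Landau terms: one must check that the $o(\delta)$ appearing in each equation of~(\ref{system}) can legitimately be treated as the \emph{same} symbol up to the bounded multiplicative constants $C_{1,AB}$, $C_{1,BC}$ obtained in the discussion preceding the theorem, so that linear combinations and division by the bounded quantity $1/\sin^2(\alpha+\gamma)$ keep the remainder of order $o(\delta)$. Since $\alpha+\gamma$ is fixed as $\delta\to 0$ (the angles converge to those of the limiting Euclidean triangle), $1/\sin^2(\alpha+\gamma)$ is a genuine constant and all such manipulations are valid; the trigonometric simplifications above are then purely algebraic identities, completing the proof.
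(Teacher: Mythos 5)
Your proposal is correct and follows essentially the same route as the paper: the paper derives the linear system (\ref{BC2})--(\ref{AB2}) and then simply asserts that solving it for $AB$ and $BC$ gives the theorem, while you carry out that solution explicitly via Cramer's rule, with the trigonometric simplifications ($\cos(\alpha+\gamma)\cos\gamma-\cos\alpha=-\sin(\alpha+\gamma)\sin\gamma$ and the cancellation of $\cos(\alpha+\gamma)-1$ against $-\sin^{2}(\alpha+\gamma)$) done correctly. Your closing remark on the bookkeeping of the $o(\delta)$ terms under bounded linear combinations is a legitimate point the paper glosses over, and it is handled adequately.
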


\section{The cosine theorem on a surface}

By using Toponogov's theorem for an infinitesimal geodesic triangle $\triangle ABC$ on $M,$
we obtain a generalization of the law of cosines for infinitesimal geodesic triangles on a surface $M.$

\begin{theorem}[Cosine Theorem]\label{cos1}
The law of cosines of an infinitesimal geodesic triangle on $M$ is given by:
\begin{equation}\label{sur1}
AC^2=AB^2+BC^2-2 AB BC \cos(\beta)+f(\alpha,\beta,AB,BC) o(AC^2)
\end{equation}
where $f(\alpha,\beta,AB,BC)$ is a rational function w.r. to $\cos\alpha,$ $\sin(\alpha),$ $\cos\beta,$ $\sin\beta,$
$AB$ and $BC$

or
\begin{equation}\label{gcosss2}
AC^2=AB^2+BC^2-2 AB BC \cos(\beta)+f(\alpha,\beta,AB,BC) (\angle A+ \angle B+\angle C-\pi)^2.
\end{equation}

\end{theorem}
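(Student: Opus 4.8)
The plan is to start from Toponogov's sine theorem, equations~(\ref{eq1}) and~(\ref{eq2}), which already express $AB$ and $BC$ in terms of $\delta = AC$, the angles $\alpha,\gamma$, and the Landau remainder $o(\delta)$. The idea is to eliminate $\delta$ between these two relations and recover an identity of the form $AC^2 = (\text{stuff in } AB, BC, \text{angles})$. Concretely, I would first solve~(\ref{eq1}) for $\delta$, writing
\begin{equation*}
\delta = \frac{\sin(\alpha+\gamma)}{\sin\gamma}\Bigl(AB - \frac{o(\delta)(1+\sin(\alpha+\gamma))}{1+\cos(\alpha+\gamma)}\Bigr),
\end{equation*}
and similarly from~(\ref{eq2}). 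Squaring one of these (or multiplying the two expressions for $\delta$ together) gives $\delta^2 = AC^2$ as a rational expression in $\cos\alpha,\sin\alpha,\cos\gamma,\sin\gamma$ times $AB^2$, $BC^2$, $AB\cdot BC$, plus cross terms involving $o(\delta)$ and $o(\delta)^2$; the latter I would collect into a single term $f_0(\alpha,\gamma,AB,BC)\,o(AC^2)$ using that $o(\delta)\cdot(\text{bounded}) = o(\delta)$ and $AB, BC = O(\delta)$, so products like $AB\cdot o(\delta)$ are $o(\delta^2) = o(AC^2)$.

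The second, and more delicate, step is to rewrite the main rational combination of $AB^2$, $BC^2$, $AB\cdot BC$ so that the leading part collapses to the classical Euclidean law of cosines $AB^2 + BC^2 - 2\,AB\,BC\cos\beta$. Here I would use~(\ref{eq1nimp}), $\beta = \pi - \alpha - \gamma + o(s)$, equivalently $\cos\beta = -\cos(\alpha+\gamma) + o(1)$ and $\sin\beta = \sin(\alpha+\gamma) + o(1)$, together with the planar sine rule applied to the \emph{Euclidean} triangle with the same angles: in the Euclidean comparison triangle one has exactly $AC^2 = AB^2 + BC^2 - 2\,AB\,BC\cos\beta$ when $\alpha + \beta + \gamma = \pi$. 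The discrepancy between the surface triangle and this comparison triangle is governed by the angle excess $\angle A + \angle B + \angle C - \pi$, which by Lemma~\ref{lem1} (Gauss--Bonnet) equals $\iint_D K\,dS = o(s) = o(AC)$. Substituting $\cos(\alpha+\gamma)$ and $\sin(\alpha+\gamma)$ in terms of $-\cos\beta$, $\sin\beta$ and the excess, and Taylor-expanding, every occurrence of the excess that multiplies a term of order $\delta^2$ (such as $AB^2$, $AB\cdot BC$) contributes an error of order $(\text{excess})\cdot\delta^2$; since the excess is $o(1)$, this is $o(AC^2)$, with coefficient a rational function of $\cos\alpha,\sin\alpha,\cos\beta,\sin\beta,AB,BC$ — exactly the claimed $f(\alpha,\beta,AB,BC)$.

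For the second displayed form~(\ref{gcosss2}), I would not re-expand: instead I note that by Lemma~\ref{lem1} the symbol $o(AC)$ may be \emph{represented} by the angle excess $\angle A + \angle B + \angle C - \pi$ itself (they agree up to higher order), so $o(AC^2)$ may be replaced by $(\angle A + \angle B + \angle C - \pi)^2$ after absorbing the proportionality constant into $f$; this is the same bookkeeping move already used in the Darboux discussion in the introduction, where $o(AC) = \angle A + \angle C - \tfrac{\pi}{2}$ is treated as the infinitesimal. Finally, I would verify consistency: setting $K \equiv 0$ forces the excess to vanish and~(\ref{sur1}) reduces to the planar law of cosines, and the formula is symmetric under the relabeling that sends $\beta \to \pi/2$, which is precisely the specialization feeding the Pythagoras statement in the abstract.

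The main obstacle I anticipate is \emph{controlling the error terms uniformly}: the sine-theorem remainders carry denominators $1 + \cos(\alpha+\gamma)$, which degenerate when $\alpha + \gamma \to \pi$, i.e. $\beta \to 0$; one must argue that for a genuine (nondegenerate) infinitesimal triangle $\beta$ is bounded away from $0$, so these rational coefficients stay bounded and the $o(\cdot)$ bookkeeping is legitimate. Keeping track of which products of $AB$, $BC$, $o(\delta)$ land in $o(AC^2)$ versus which survive as genuine quadratic terms — and checking that no term of order $\delta\cdot o(\delta) = o(\delta^2)$ is mistakenly promoted to leading order — is the delicate accounting that the rest of the proof must make precise.
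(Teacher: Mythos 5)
Your proposal is correct at the paper's level of rigor and rests on the same two pillars as the paper's proof: Toponogov's sine theorem and Lemma~\ref{lem1} (used through $\beta=\pi-\alpha-\gamma+o(AC)$), followed by Landau-symbol bookkeeping and the identification of $o(AC)$ with the angle excess to pass from (\ref{sur1}) to (\ref{gcosss2}). The algebraic route, however, is genuinely different. The paper never inverts the solved formulas (\ref{eq1}), (\ref{eq2}); instead it returns to the linear relations behind them, keeping (\ref{AB2}) in the form $\cos(\alpha+\gamma)BC+AB=\cos\alpha\,\delta+m(\alpha,\gamma)o(\delta)$ and rewriting (\ref{eq2}) as $\sin(\alpha+\gamma)BC=\sin\alpha\,\delta+w(\alpha,\gamma)o(\delta)$, and then simply squares and adds: thanks to $\cos^2+\sin^2=1$ the combination $AB^2+BC^2+2\,AB\,BC\cos(\alpha+\gamma)=\delta^2+\cdots$ appears in one stroke, after which $\cos(\alpha+\gamma)$ is traded for $-\cos\beta$ plus an $o(AC)$ correction. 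Your route --- solving (\ref{eq1}), (\ref{eq2}) for $\delta$ and squaring or multiplying --- yields $\delta^2$ in terms of $AB^2$, $BC^2$ or $AB\,BC$ separately, so to reassemble the specific combination $AB^2+BC^2-2\,AB\,BC\cos\beta$ you need one extra ingredient: either the identity $\sin^2\alpha+\sin^2\gamma+2\sin\alpha\sin\gamma\cos(\alpha+\gamma)=\sin^2(\alpha+\gamma)$ or, equivalently, your Euclidean comparison triangle with angles $\alpha,\gamma,\pi-\alpha-\gamma$ and the same $\delta$, whose sides differ from $AB,BC$ by $o(\delta)$ and whose exact law of cosines you then perturb. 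That step is legitimate and gives the same rational coefficient structure for $f$, but it is exactly what the paper's squaring-and-adding trick renders unnecessary, so the paper's computation is somewhat shorter. Your caveat about degenerating denominators ($1+\cos(\alpha+\gamma)$, and $\sin(\alpha+\gamma)$ in your inversion) as $\beta\to 0$ is well taken; the paper's coefficient $w(\alpha,\gamma)$ has the same issue and nondegeneracy of the infinitesimal triangle is tacitly assumed there as well.
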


\begin{proof}
We set $m(\alpha,\gamma)\equiv 1+\sin(\alpha+\gamma).$

From (\ref{eq2}) of the sine theorem of Toponogov, we get:
\begin{equation}\label{gcos1}
\sin(\alpha+\gamma) BC = \sin(\alpha) \delta + \frac{o(\delta)\sin(\alpha+\gamma)m(\alpha,\gamma)}{m(\frac{\pi}{2}-\alpha,\frac{\pi}{2}-\gamma)}
\end{equation}
By setting $w(\alpha,\gamma)\equiv \frac{\sin(\alpha+\gamma)m(\alpha,\gamma)}{m(\frac{\pi}{2}-\alpha,\frac{\pi}{2}-\gamma)}$
and by replacing $w(\alpha,\gamma)$ in (\ref{gcos1}), we derive that:

\begin{equation}\label{gcos2}
\sin(\alpha+\gamma) BC = \sin(\alpha) \delta + w(\alpha,\gamma) o(\delta)
\end{equation}

By squaring both parts of (\ref{BC2}) and (\ref{gcos2}) and by adding the two derived equations, we obtain that:

\begin{eqnarray}\label{gcos3}
AB^2+BC^2+2 AB BC \cos(\alpha+\gamma)=AC^2+ (m^2(\alpha,\gamma)+w^2(\alpha,\gamma))o(AC^2)+\\\nonumber
+2 (\cos\alpha m(\alpha,\gamma)+\sin\alpha w(\alpha,\gamma))AC o(AC)
\end{eqnarray}

By using the property of Landau symbol $o(AC^2)=AC o(AC),$ we obtain:

\begin{eqnarray}\label{gcos4}
AB^2+BC^2+2 AB BC \cos(\alpha+\gamma)=AC^2+\\\nonumber+
(m^2(\alpha,\gamma)+w^2(\alpha,\gamma)+2 (\cos\alpha m(\alpha,\gamma)+\sin\alpha w(\alpha,\gamma))o(AC^2).
\end{eqnarray}

By replacing $\alpha+\gamma=\pi-\beta+o(AC)$ in $\cos(\alpha+\gamma),$ we obtain:

\begin{eqnarray}\label{cosag}
\cos(\alpha+\gamma)=\cos(\pi-\beta+o(AC))=\cos\beta(1-o(AC)^2/2)-\sin\beta (o(AC))
\end{eqnarray}
By replacing (\ref{cosag}) in (\ref{gcos4}) and taking into account properties of the Landau symbols $o,$
we obtain:

\begin{equation}\label{gcosss}
AC^2=AB^2+BC^2-2 AB BC \cos(\beta)+f(\alpha,\beta, AB, BC) (\angle A+ \angle B+\angle C-\pi)^2.
\end{equation}

\end{proof}

\begin{corollary}
For $\angle A+\angle B+\angle C=\pi,$ we derive the law of cosines in $\mathbb{R}^2.$
\end{corollary}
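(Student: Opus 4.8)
The plan is to obtain the corollary as an immediate specialisation of the Cosine Theorem (Theorem~\ref{cos1}); no surface geometry beyond what is already in the excerpt is needed. First I would recall, via Lemma~\ref{lem1}, that $\angle A+\angle B+\angle C-\pi=\int\int_{D}K\,dS$, so the hypothesis $\angle A+\angle B+\angle C=\pi$ says precisely that the total Gaussian curvature over the region $D$ bounded by $\triangle ABC$ vanishes --- the case realised, for instance, whenever $M$ is flat (planar or developable) along $D$, and in which all the auxiliary Landau terms appearing in the proof of Theorem~\ref{cos1} also vanish, so that the formula becomes an honest identity.

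Next I would substitute $\angle A+\angle B+\angle C-\pi=0$ into (\ref{gcosss2}),
\[
AC^2=AB^2+BC^2-2\,AB\,BC\,\cos(\beta)+f(\alpha,\beta,AB,BC)\,(\angle A+\angle B+\angle C-\pi)^2 .
\]
The last term is the product of $f(\alpha,\beta,AB,BC)$ with $(\angle A+\angle B+\angle C-\pi)^2=0$, hence it drops out and leaves
\[
AC^2=AB^2+BC^2-2\,AB\,BC\,\cos(\beta),
\]
which is exactly the Euclidean law of cosines of Book~II, Propositions~12--13 of \cite{EuclidHeath:56}. Relabelling the vertices and invoking the corresponding permutations of (\ref{gcosss2}) gives the companion identities $BC^2=AB^2+AC^2-2\,AB\,AC\cos\alpha$ and $AB^2=AC^2+BC^2-2\,AC\,BC\cos\gamma$; putting $\beta=\frac{\pi}{2}$ then recovers the ordinary theorem of Pythagoras in $\mathbb{R}^2$.

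The one point that deserves care --- and essentially the only thing I would actually have to check --- is that the vanishing factor $(\angle A+\angle B+\angle C-\pi)^2$ genuinely annihilates the correction term, i.e. that $f(\alpha,\beta,AB,BC)$ is finite for the triangle at hand. By its construction in the proof of Theorem~\ref{cos1}, $f$ is a rational expression in $m(\alpha,\gamma)=1+\sin(\alpha+\gamma)$, $w(\alpha,\gamma)$, $\cos\alpha$, $\sin\alpha$, $AB$, $BC$, with denominators of the type $1+\cos(\alpha+\gamma)$ inherited from Toponogov's sine theorem (\ref{eq1})--(\ref{eq2}); for a non-degenerate geodesic triangle one has $0<\alpha+\gamma<\pi$, so $1+\cos(\alpha+\gamma)\neq 0$ and $f$ has no pole. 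Hence $f\cdot 0=0$ rigorously, and the corollary follows without any limiting argument.
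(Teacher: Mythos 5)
Your proposal is correct and is essentially the intended argument: the paper states the corollary without proof, and the natural justification is exactly your substitution of $\angle A+\angle B+\angle C-\pi=0$ into (\ref{gcosss2}), which annihilates the correction term and leaves the Euclidean law of cosines. Your extra check that $f(\alpha,\beta,AB,BC)$ has no pole (since $1+\cos(\alpha+\gamma)\neq 0$ for a non-degenerate triangle) is a sensible precaution but does not change the route.
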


By replacing $\angle B=\frac{\pi}{2}$ in Theorem~~2, we derive as a special case of the cosine theorem for infinitesimal geodesic triangles on $M$ the generalized Pythagorean Theorem for right infinitesimal geodesic triangles  on a surface $M.$

\begin{theorem}[The generalized Pythagorean Theorem on $M$]

The generalized theorem of Pythagoras for $\triangle ABC,$ and $\angle B=\frac{\pi}{2}$ on a surface
is given by:
\[AC^2=AB^2+BC^2+f(\angle A, \frac{\pi}{2}, AB, BC) o(AC^{2})\]

or

\[AC^2=AB^2+BC^2+(\angle A+ \angle C-\frac{\pi}{2})^2.\]
\end{theorem}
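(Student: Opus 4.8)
The plan is to obtain this statement as the immediate $\angle B=\frac{\pi}{2}$ specialization of the Cosine Theorem (Theorem~\ref{cos1}); no new machinery is needed, only a substitution together with a short boundedness check. First I would put $\beta\equiv\angle B=\frac{\pi}{2}$ in (\ref{sur1}). Since $\cos\frac{\pi}{2}=0$, the cross term $-2\,AB\,BC\cos\beta$ disappears identically, and (\ref{sur1}) collapses to
\[
AC^2=AB^2+BC^2+f\Bigl(\angle A,\tfrac{\pi}{2},AB,BC\Bigr)o(AC^2),
\]
which is the first asserted identity.

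Next, for the second form I would invoke Lemma~\ref{lem1}: the infinitesimal Gauss--Bonnet relations (\ref{gb})--(\ref{gb2}) give $\angle A+\angle B+\angle C-\pi=\iint_{D}K\,dS=o(AC)$, so with $\angle B=\frac{\pi}{2}$ this reads $\angle A+\angle C-\frac{\pi}{2}=o(AC)$, whence $\bigl(\angle A+\angle C-\tfrac{\pi}{2}\bigr)^2=o(AC)^2=o(AC^2)$. Substituting $\beta=\frac{\pi}{2}$ into the equivalent form (\ref{gcosss2}) of the Cosine Theorem then yields
\[
AC^2=AB^2+BC^2+f\Bigl(\angle A,\tfrac{\pi}{2},AB,BC\Bigr)\Bigl(\angle A+\angle C-\tfrac{\pi}{2}\Bigr)^2.
\]
To pass from here to the clean statement $AC^2=AB^2+BC^2+\bigl(\angle A+\angle C-\tfrac{\pi}{2}\bigr)^2$, I would argue exactly as in the Darboux discussion of the Introduction: the prefactor $f(\angle A,\frac{\pi}{2},AB,BC)$ is a bounded quantity, since $\cos\frac{\pi}{2}=0$, $\sin\frac{\pi}{2}=1$, and $AB,BC,\cos\angle A,\sin\angle A$ all lie in compact sets for the family of infinitesimal geodesic triangles under consideration; multiplying the infinitesimal $\bigl(\angle A+\angle C-\tfrac{\pi}{2}\bigr)^2$ by it does not change its order, so in the infinitesimal regime the coefficient is absorbed into the squared angle defect.

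The only genuine point to verify is that $f$ does not blow up at $\beta=\frac{\pi}{2}$, i.e.\ at $\alpha+\gamma=\pi-\beta=\frac{\pi}{2}$. Tracing the construction in the proof of Theorem~\ref{cos1} --- the solution of the linear system (\ref{BC2}), (\ref{AB2}) for $AB,BC$, the factor $w(\alpha,\gamma)$ entering (\ref{gcos1}), and the expansion (\ref{cosag}) of $\cos(\alpha+\gamma)$ --- the only denominators that occur are $\sin(\alpha+\gamma)$ and $1+\cos(\alpha+\gamma)$ (equivalently $1+\sin(\alpha+\gamma)$ inside $w$). At $\alpha+\gamma=\frac{\pi}{2}$ these equal $1$, $1$ and $2$ respectively, all nonzero, so $f$ is finite there and the boundedness claim is legitimate; degeneracy ($1-\cos\beta=0$ at $\beta=0$) is excluded for a genuine triangle. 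I expect this denominator check to be the main --- and essentially only --- obstacle; everything else is the substitution $\angle B=\frac{\pi}{2}$ together with the already-established identity $o(AC^2)=AC\,o(AC)$ and Lemma~\ref{lem1}.
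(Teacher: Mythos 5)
Your proposal is correct and follows essentially the same route as the paper: the paper obtains this theorem precisely by substituting $\angle B=\frac{\pi}{2}$ into Theorem~\ref{cos1}, so the cross term $-2\,AB\,BC\cos\beta$ vanishes and the remainder term becomes $f(\angle A,\frac{\pi}{2},AB,BC)\,o(AC^2)$, respectively $(\angle A+\angle C-\frac{\pi}{2})^2$ via Lemma~\ref{lem1}. Your additional check that the denominators $\sin(\alpha+\gamma)$ and $1+\cos(\alpha+\gamma)$ are nonzero at $\alpha+\gamma=\frac{\pi}{2}$, and the explicit absorption of the bounded prefactor $f$ into the squared angle defect, only make explicit what the paper leaves implicit.
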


\end{document}